\date{}
\renewcommand{\uppercasenonmath}[1]{}
\theoremstyle{plain}
\newtheorem{theorem}{Theorem}[section]
\newtheorem{lemma}[theorem]{Lemma}
\newtheorem{corollary}[theorem]{Corollary}
\newtheorem{example}[theorem]{Example}
\newtheorem*{open question}{Open Question}
\theoremstyle{definition}
\newtheorem*{acknowledgement}{Acknowledgement}
\theoremstyle{remark}
\newcommand{\Tor}{\mbox{\rm Tor}}
\def\p{\frak p}
\def\GV{{\rm GV}}
\def\tor{{\rm tor_{\rm GV}}}
\def\Hom{{\rm Hom}}
\def\Ext{{\rm Ext}}
\def\Tor{{\rm Tor}}
\def\Ker{{\rm Ker}}
\def\GV{{\rm GV}}
\begin{document}
\begin{center}
{\large  \bf A note on a Cohen-type theorem for $w$-Artinian modules}

\vspace{0.5cm}   Xiaolei Zhang$^{a}$

{\footnotesize  a.\ School of Mathematics and Statistics, Shandong University of Technology, Zibo 255049, China\\

E-mail: zxlrghj@126.com\\}
\end{center}

\bigskip
\centerline { \bf  Abstract}
\bigskip
\leftskip10truemm \rightskip10truemm \noindent

In this note, we prove that a $w$-module $M$ is $w$-Artinian if and only if  it is $w$-cofinitely generated and  for every prime $w$-ideal $\p$ of $R$ with $(0:_RM)\subseteq \p$, there exists a $w$-submodule $N^\p$ of $M$  such that $(M/N^\p)_w$ is $w$-cofinitely generated and $(M[\p])_w\subseteq N^\p\subseteq (0:_M\p)$, where $M[\p]=\bigcap\limits_{s\in R \setminus \p}s(0:_M\p).$ Besides, we show that every principal ideal is not a $w$-ideal for some Noetherian rings.
\vbox to 0.3cm{}

\bigskip
{\it Key Words:} Cohen-type theorem; $w$-Artinian  module;  $w$-cofinitely generated  module; $w$-operation.

{\it 2020 Mathematics Subject Classification:}  13E10, 13D30.

\leftskip0truemm \rightskip0truemm
\bigskip

\section{Introduction}
Throughout this article, all rings are commutative rings with identity and all modules are unitary. Let $R$ be a ring, $I$ an ideal of $R$ and $M$ an $R$-module. We denote by $(0:_RM):=\{r\in R\mid rM=0\}$ and $(0:_MI):=\{m\in M\mid Im=0\}$.
The well-known Cohen's Theorem states that a ring $R$ is a Noetherian ring  if and only if every prime ideal  $\p$ of $R$ is finitely generated (see \cite[Theorem 2]{c50}). In 1994, Smith \cite{s94} extended  Cohen's Theorem from rings to  modules, that is, a finitely generated $R$-module $M$ is Noetherian if and only if the submodules $\p M$ of $M$ are finitely generated for every prime ideal $\p$ of $R$, if and only if $M(\p)$ is finitely generated  for each prime ideal $\p$ of $R$ with $(0:_RM)\subseteq\p$, where $M(\p)=\{x\in M \mid sx\in \p M $ for some $s\in R \setminus \p \}$.  In 2021, Parkash and  Kour \cite{pk21} generalized the Smith's result on Noetherian modules and obtained that a finitely generated $R$-module $M$ is Noetherian if and only if for every prime ideal $\p$ of $R$ with $(0:_RM)\subseteq \p$, there exists a finitely generated  submodule $N_\p$ of $M$ such that $\p M\subseteq N_\p\subseteq M(\p)$. Recently, the  author et al. \cite{zkq23} gave a $w$-analogue of Parkash and  Kour's result which states that a  $\GV$-torsion-free $w$-finite type $R$-module $M$ is  $w$-Noetherian  if and only if for every prime $w$-ideal $\p$ of $R$ with $(0:_RM)\subseteq \p$, there exists a $w$-finite type submodule $N^{\p}$ of $M$ such that $\p M\subseteq N^{\p}\subseteq M(\p)$.

Recall that  an $R$-module $M$ is said to be {\em Artinian} if it satisfies the minimal condition for submodules, or equivalently, the descending chain condition on submodules. And $M$ is said to be {\em cofinitely generated} (which is also called {\em finitely embedded} in some other papers) if  for any set $\{M_i | i\in \Omega\}$  of submodules of $M$ satisfying $\bigcap\limits_{i\in \Omega}M_i=0$, there exists a finite subset $\Omega_0\subseteq \Omega$  such that  $\bigcap\limits_{i\in \Omega_0}M_i=0$.  A family $\{M_i\}_{i\in \Lambda}$ of submodules of $M$ is called an {\em inverse system} if for any finite number of $i_1,i_2,\dots,i_k$ of an index set $\Lambda$, there is an element $i\in \Lambda$ such that $M_i\subseteq \bigcap\limits_{j=1}^k M_{i_j}$. By \cite[Proposition 3.19]{sv72}, $M$ is cofinitely generated if and only if every inverse system of nonzero submodules of $M$ is bounded below by a nonzero submodule of $M$.

It is well known that a Noetherian module is exactly a module of which all submodules are finitely generated. Dually, an $R$-module $M$ is  Artinian if and only if every factor module of $M$ is cofinitely generated  (see  \cite[Theorem 3.21]{sv72}). In 2006, Nishitani \cite{N06} obtained a Cohen-type theorem for Artinian modules: A finitely embedded module $M$ is Artinian if and only if $M/(0:_M\p)$ is cofinitely generated  for every prime ideal $\p$ of $R$. Recently, the author et al.  generalized the Nishitani's result and dualized   the Parkash and  Kour's result as follows:
\begin{theorem}\cite[Theorem 2.1]{zkq23-1}
A finitely embedded  $R$-module $M$ is Artinian if and only if for every prime ideal $\p$ of $R$ with $(0:_RM)\subseteq \p$, there exists a submodule $N^\p$ of $M$  such that $M/N^\p$ is finitely  embedded and $M[\p]\subseteq N^\p\subseteq (0:_M\p)$, where $M[\p]=\bigcap\limits_{s\in R \setminus \p}s(0:_M\p).$
\end{theorem}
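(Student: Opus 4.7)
The strategy is to apply the \textbf{Nishitani criterion} recalled in the introduction: a finitely embedded $R$-module $M$ is Artinian if and only if $M/(0:_M\p)$ is finitely embedded for every prime ideal $\p$ of $R$. The forward direction is immediate by taking $N^\p = (0:_M\p)$: the quotient $M/N^\p = M/(0:_M\p)$ is Artinian as a quotient of the Artinian $M$, hence finitely embedded, and $M[\p]\subseteq (0:_M\p) = N^\p$ holds by the very definition of $M[\p]$.

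For the backward direction, after replacing $R$ by $R/(0:_RM)$ we may assume $(0:_RM)=0$, so that every prime of $R$ is eligible for the hypothesis. Fix a prime $\p$ and its hypothesized $N^\p$, and set $V:=(0:_M\p)/N^\p$. As a submodule of the finitely embedded $M/N^\p$, $V$ is finitely embedded, and it is annihilated by $\p$. The assumption $M[\p]\subseteq N^\p$ translates into
$$\bigcap_{s\in R\setminus\p}sV=0\quad\text{in }V.$$
Since $R\setminus\p$ is multiplicatively closed, $\{sV\}_{s\in R\setminus\p}$ forms an inverse system in $V$, so the Sharpe--Vamos criterion yields an element $s_0\in R\setminus\p$ with $s_0V=0$; equivalently, $s_0(0:_M\p)\subseteq N^\p$.

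Multiplication by $s_0$ then induces a well-defined homomorphism $\psi\colon M/(0:_M\p)\to M/N^\p$, $\overline m\mapsto\overline{s_0 m}$. Its image is a submodule of the finitely embedded $M/N^\p$ and is therefore finitely embedded, and its kernel is $K=(N^\p:_M s_0)/(0:_M\p)$. Placing $M/(0:_M\p)$ into the short exact sequence $0\to K\to M/(0:_M\p)\to \Im\psi\to 0$ and invoking closure of the class of finitely embedded modules under extensions (their essential, finitely generated socles transfer through short exact sequences), the task reduces to showing that $K$ is finitely embedded.

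The \textbf{main obstacle} is precisely this last step, because finite embeddedness is not preserved under arbitrary quotients. When $\p$ is maximal, $V$ is a vector space over the field $R/\p$ annihilated by a nonzero element, forcing $V=0$ and hence $K=0$. For non-maximal $\p$, the plan is to refine the selection of $s_0$: using that a finitely embedded $M$ has a finite essential socle $\bigoplus_{i=1}^{n}R/\m_i$ (so only finitely many associated primes need be considered), prime avoidance allows $s_0$ to be chosen simultaneously outside $\p$, inside $\Ann_R(V)$, and a non-zero-divisor on $M$; such a choice renders $\psi$ injective, forces $K=0$, and yields the desired embedding $M/(0:_M\p)\hookrightarrow M/N^\p$. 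Once $M/(0:_M\p)$ is finitely embedded for every prime $\p$, Nishitani's criterion concludes that $M$ is Artinian.
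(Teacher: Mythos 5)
Your forward direction is correct and matches the standard one (take $N^\p=(0:_M\p)$). The backward direction, however, has two genuine gaps. First, the translation of $M[\p]\subseteq N^\p$ into $\bigcap_{s\in R\setminus\p}sV=0$ for $V=(0:_M\p)/N^\p$ is false in general: $sV=(s(0:_M\p)+N^\p)/N^\p$, and an intersection of sums $\bigcap_s\bigl(s(0:_M\p)+N^\p\bigr)$ can be strictly larger than $\bigl(\bigcap_s s(0:_M\p)\bigr)+N^\p=N^\p$ even for a downward-directed family (intersections do not commute with passage to the quotient). So the existence of $s_0\in R\setminus\p$ with $s_0(0:_M\p)\subseteq N^\p$ is not established, and everything downstream depends on it. Second, even granting such an $s_0$, the step you yourself flag as the ``main obstacle'' is not closed: the prime-avoidance plan requires $\Ann_R(V)\not\subseteq\m_i$ for every associated (maximal) prime $\m_i$ of $M$, and nothing in the hypotheses rules out $\Ann_R(V)\subseteq\m_i$; so an $M$-regular $s_0$ annihilating $V$ need not exist, and the kernel $K=(N^\p:_Ms_0)/(0:_M\p)$ is left unhandled. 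As it stands the proposal is a correct reduction to Nishitani's criterion followed by an incomplete attempt to verify that criterion prime by prime.

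It is worth contrasting this with how the paper (and its source) actually argues, because the difference is exactly what dissolves your obstacle. Rather than verifying $M/(0:_M\p)$ is finitely embedded for each $\p$ separately, one argues by contradiction: if $M$ is not Artinian, Nishitani's lemma produces an ideal $I$ with $(0:_MI)$ Artinian and $M/(0:_MI)$ not finitely embedded; Zorn's lemma (applied to submodules of $(0:_MI)$ with non-finitely-embedded quotient, ordered by reverse inclusion, using the inverse-system characterization to get chains bounded) yields a minimal such $N$; one then shows $\p:=(0:_RN)$ is prime and $N\subseteq M[\p]$, both by exploiting the minimality of $N$. The hypothesis gives $N\subseteq M[\p]\subseteq N^\p\subseteq(0:_M\p)\subseteq(0:_MI)$, so $N^\p/N$ sits inside the Artinian module $(0:_MI)/N$ and is finitely embedded for free; the extension $0\to N^\p/N\to M/N\to M/N^\p\to 0$ then contradicts the choice of $N$. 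The point you are missing is that the troublesome ``kernel-type'' quotient is controlled not by choosing a good multiplier $s_0$ but by arranging, from the outset, that all the relevant submodules live inside an Artinian piece of $M$.
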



The main motivation of this note is to give a $w$-analogue of \cite[Theorem 2.1]{zkq23-1}. We recall some notions on the $w$-operation. Let $R$ be a commutative ring and $J$ a finitely generated ideal of $R$. Then $J$ is called a \emph{$\GV$-ideal} if the natural homomorphism $R\rightarrow \Hom_R(J,R)$ is an isomorphism. The set of $\GV$-ideals is denoted by $\GV(R)$. Let $M$ be an $R$-module. Define

\begin{center}
{\rm $\tor(M):=\{x\in M|Jx=0$ for some $J\in \GV(R) \}.$}
\end{center}
An $R$-module $M$ is said to be \emph{$\GV$-torsion} (resp. \emph{$\GV$-torsion-free}) if $\tor(M)=M$ (resp. $\tor(M)=0$). A $\GV$-torsion-free module $M$ is called a \emph{$w$-module} if $\Ext_R^1(R/J,M)=0$ for any $J\in \GV(R)$, and the \emph{$w$-envelope} of $M$ is given by
\begin{center}
{\rm $M_w:=\{x\in E(M)|Jx\subseteq M$ for some $J\in \GV(R) \},$}
\end{center}
where $E(M)$ is the injective envelope of $M$. Therefore, a $\GV$-torsion-free module $M$ is a $w$-module if and only if $M_w=M$. The class of $w$-modules is closed under direct limits and inverse limits (see {\cite[Theorem 7, Theorem 11]{DF16}}). Let $0\rightarrow M\rightarrow N\rightarrow L\rightarrow 0$ be a short exact sequence. It is easy to verify that if  $N$ is a $\GV$-torsion-free $R$-module and $M$ is a $w$-module, then $L$ is a $\GV$-torsion-free $R$-module.

Recall from \cite[Definition 6.9.1]{fk16} that a $w$-module $M$ is said to be  {\em $w$-Artinian} if $M$ satisfies the descending chain condition on $w$-submodules
of $M$. Clearly every Artinian module is $w$-Artinian, but the converse does not hold (see \cite[Example
6.9.7]{fk16}).  By \cite[Theorem 6.9.2]{fk16}, a $w$-module $M$ is $w$-Artinian if and only if it has the minimal condition on $w$-submodules of $M$, if and only if for any set $\{M_i | i\in \Omega\}$ of $w$-submodules of $M$, there is a finite subset $\Omega_0\subseteq \Omega$ such that $\bigcap\limits_{i\in \Omega}M_i = \bigcap\limits_{i\in \Omega_0}M_i$. To extend the notion of $w$-Artinian modules, Zhou, Kim and Hu
\cite[Definition 2.1]{zkh21} called a $w$-module $M$ {\em $w$-cofinitely generated} if for any set $\{M_i | i\in \Omega\}$  of $w$-submodules of $M$ satisfying $\bigcap\limits_{i\in \Omega}M_i=0$, there exists a finite subset $\Omega_0\subseteq \Omega$  such that  $\bigcap\limits_{i\in \Omega_0}M_i=0$. They showed that a $w$-module $M$ is $w$-cofinitely generated if and only if it is an essential extension of a $w$-Artinian module,  if and only if every inverse system of nonzero $w$-submodules of $M$ is bounded below by a nonzero $w$-submodule of $M$ (see \cite[Theorem 2.4, Proposition 2.11]{zkh21}). And finally, they obtained a Cohen-type Theorem for $w$-Artinian modules, which can be seen as a $w$-analogue of Nishitani's result in \cite{N06}:
\begin{theorem}\cite[Theorem 4.10]{zkh21} Let $R$ be a ring. A $w$-module $M$ over $R$ is $w$-Artinian if and only if $M$ is $w$-cofinitely generated and  $(M/(0:_M\p))_w$ is $w$-cofinitely generated  for every prime $w$-ideal $\p$ of $R$.
\end{theorem}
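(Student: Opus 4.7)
The plan is to prove both directions, with the reverse being the substantive content.

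For necessity ($\Rightarrow$), I would argue directly from the definitions: if $M$ is $w$-Artinian, then any family of $w$-submodules with zero intersection admits a finite subfamily with zero intersection (taking the descending chain of partial intersections and applying the descending chain condition), so $M$ is $w$-cofinitely generated. For each prime $w$-ideal $\p$, the $w$-quotient $(M/(0:_M\p))_w$ inherits the $w$-Artinian property via the order-preserving correspondence between $w$-submodules of $(M/N)_w$ and $w$-submodules of $M$ containing $N$; hence it is $w$-cofinitely generated.

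For sufficiency ($\Leftarrow$), I would proceed by contradiction, assuming $M$ satisfies the listed hypotheses but is not $w$-Artinian. First I would invoke the $w$-analog of the classical characterization, namely that a $w$-module is $w$-Artinian iff every one of its $w$-quotients is $w$-cofinitely generated, to produce a $w$-submodule $N_0 \subseteq M$ with $(M/N_0)_w$ not $w$-cofinitely generated. Next I would introduce
\[
\Sigma = \{ I \text{ ideal of } R : (M/(0:_M I))_w \text{ is not } w\text{-cofinitely generated} \},
\]
which is nonempty (taking $I = (0:_R M/N_0)$ witnesses failure), and apply Zorn's lemma: given an ascending chain $\{I_\alpha\}$ with union $I$, the identity $(0:_M I) = \bigcap_\alpha (0:_M I_\alpha)$ together with the inverse-system characterization of $w$-cofinitely generated modules from \cite[Proposition 2.11]{zkh21} would yield $I \in \Sigma$. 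Fix a maximal $\p \in \Sigma$.

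The crux will be to show $\p$ is a prime ideal; then its $w$-envelope $\p_w$ is a prime $w$-ideal with $(0:_M \p_w) = (0:_M \p)$ (using that $M$ is a $w$-module), producing a prime $w$-ideal in $\Sigma$ and contradicting the hypothesis. To establish primality, suppose $ab \in \p$ with $a, b \notin \p$. Maximality of $\p$ in $\Sigma$ forces both $(M/(0:_M\p+Ra))_w$ and $(M/(0:_M(\p:a)))_w$ to be $w$-cofinitely generated. Using the inclusions $(0:_M\p+Ra) \subseteq (0:_M\p) \subseteq (0:_M(\p:a))$ and multiplication by $a$, which identifies $(0:_M\p)/(0:_M\p+Ra)$ with the submodule $a(0:_M\p)$ of $(0:_M(\p:a))$, I would splice these modules into an exact triangle exhibiting $(M/(0:_M\p))_w$ as sandwiched between $w$-cofinitely generated pieces, and conclude via the closure of $w$-cofinitely generated modules under $w$-submodules and $w$-extensions that $(M/(0:_M\p))_w$ is itself $w$-cofinitely generated, contradicting $\p \in \Sigma$.

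The main obstacle I anticipate is the Zorn step: verifying that non-$w$-cofinite-generation is preserved under ascending unions requires exhibiting an explicit inverse system of nonzero $w$-submodules in $(M/(0:_M I))_w$ with no nonzero lower bound, built compatibly from such systems in each $(M/(0:_M I_\alpha))_w$. A secondary difficulty is that short exact sequences interact delicately with the $w$-envelope functor; to make the primality argument watertight, one would first perform the subquotient calculation in the $\GV$-torsion-free category and then check that the relevant identifications persist after $w$-enveloping, invoking left-exactness properties of the $w$-operation on the relevant modules.
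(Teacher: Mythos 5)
Your proposal reproves the cited result \cite[Theorem 4.10]{zkh21} from scratch, whereas the paper obtains this statement as an immediate consequence of Theorem \ref{main}: take $N^\p=(0:_M\p)$, which is a $w$-submodule by Lemma \ref{ann-w} and satisfies $(M[\p])_w\subseteq (0:_M\p)$ automatically, so condition (2) of Theorem \ref{main} holds verbatim. A direct proof is a legitimate alternative route, but as sketched it has two genuine gaps.

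First, the nonemptiness of $\Sigma$. The witness $I=(0:_R(M/N_0))$ does not work: if $M/N_0$ is faithful then $I=0$ and $(M/(0:_MI))_w=0$ is trivially $w$-cofinitely generated, and in general there is no containment between $(0:_MI)$ and $N_0$ forcing $(M/(0:_MI))_w$ to inherit the failure. Passing from ``some $w$-quotient of $M$ is not $w$-cofinitely generated'' to ``some quotient by an annihilator submodule $(0:_MI)$ is not $w$-cofinitely generated'' is exactly the content of the $w$-analogue of Nishitani's Lemma 7 in \cite{N06}, which the paper's own proof of Theorem \ref{main} must invoke at the end; it cannot be asserted for free. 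Second, the primality step. The inclusion $(0:_M\p)\subseteq (0:_M(\p:a))$ is backwards, since $\p\subseteq(\p:a)$ gives $(0:_M(\p:a))\subseteq(0:_M\p)$. More seriously, the exact sequences available here, e.g. $0\to (0:_M\p)/(0:_M(\p+Ra))\to M/(0:_M(\p+Ra))\to M/(0:_M\p)\to 0$, exhibit $(M/(0:_M\p))_w$ as a \emph{quotient} of a $w$-cofinitely generated module; but $w$-cofinite generation is closed under $w$-submodules and extensions (Lemma \ref{tw-prop}, Corollary \ref{w-cof-prop}), not under quotients --- if it were closed under quotients, the theorem would be trivial, since ``every $w$-quotient is $w$-cofinitely generated'' is equivalent to $w$-Artinian. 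So the proposed sandwich does not yield the contradiction. The paper's Claim 2 in the proof of Theorem \ref{main} avoids both problems by applying Zorn's Lemma to arbitrary $w$-submodules $N$ ordered by reverse inclusion and proving primality of $(0:_RN)$ via the sequence $0\to (0:_Ma)/(0:_Na)\to M/N\to aM/aN\to 0$ together with the conclusion $(aN)_w=N$; you would need to import that machinery (or Nishitani's lemma) to make your version watertight.
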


In this note, we gave a new Cohen-type Theorem for $w$-Artinian modules which generalizes \cite[Theorem 4.10]{zkh21} and \cite[Theorem 2.1]{zkq23-1}. Actually, we obtain the following main result of this note.\\
{\bf Theorem 2.5} Let $R$ be a ring and $M$ a $w$-module. Then the following statements are equivalent:
 \begin{enumerate}
\item $M$ is $w$-Artinian;
 \item  $M$ is $w$-cofinitely generated and  for every prime $w$-ideal $\p$ of $R$ with $(0:_RM)\subseteq \p$, there exists a $w$-submodule $N^\p$ of $M$  such that $(M/N^\p)_w$ is $w$-cofinitely generated and $(M [\p])_w\subseteq N^\p\subseteq (0:_M\p)$, where $M[\p]=\bigcap\limits_{s\in R \setminus \p}s(0:_M\p).$
 \end{enumerate}

We recall some notions on  star operations. Let $R$ be a ring, $Q$ its total quotient ring and $\overline{\mathscr{F}}(R)$ the set of all submodules $A$ of $Q$. Recall from \cite{ZKWC} (also see \cite{N15}) that a set map $\star:\overline{\mathscr{F}}(R)\rightarrow \overline{\mathscr{F}}(R)$ is said to be a  \emph{star operation} provided
it satisfies the following properties: for all $A, B\in \overline{\mathscr{F}}(R)$, we have
 \begin{enumerate}
\item  (Extension) $A\subseteq A_{\star}$;
\item (Order-preservation) If $A\subseteq B$ then $A_{\star}\subseteq B_{\star}$;
\item (Idempotence) $A_{\star}= (A_{\star})_{\star}$;
\item (Sub-multiplication) $A_{\star}B_{\star}\subseteq(AB)_{\star}$;
\item (unital) $ R_{\star}=R$.
 \end{enumerate}
Moreover,  if  for any $a\in R$ and $A\in \overline{\mathscr{F}}(R)$, we have

 ~ (6) (Principal) $(aR)_{\star}=aR$, $(aA)_{\star}=aA_{\star}$.\\
then we call $\star$ is  a \emph{principally star operation}. Note that $(6)$ essentially comes from  Gilmer \cite[Section 32]{G72} which was
first proposed by Krull \cite{K35} to study the more generalized Kronecker function ring and
renamed it to its current name by Gilmer in \cite{G72}.  Certainly, the $w$-operation is a  star operation on a ring, and a principally  star operation on an integral domain. It is an interesting question  whether the $w$-operation is a principally star operation on any ring. In this note,  we show that a principal ideal need not be a $w$-ideal even for Noetherian rings, so the $w$-operation is, generally, a star operation rather than a principally  star operation (see Example  \ref{prin-not-w}).

\section{Results}

We begin with following easy result.
\begin{lemma}\label{ann-w} Let $R$ be a ring, $I$ an ideal of $R$ and $M$ a $w$-module over $R$.  Then $(0:_MI)$ is a $w$-submodule of $M$.
\end{lemma}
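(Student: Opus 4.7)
The plan is to verify the two defining conditions of a $w$-module for $(0:_M I)$: first, that it is $\GV$-torsion-free, and second, that its $w$-envelope collapses to itself. The first condition is immediate from the hypothesis on $M$, since any submodule of a $\GV$-torsion-free module is $\GV$-torsion-free.

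For the second (and main) condition, I would argue directly using the element-wise description of the $w$-envelope. Let $x \in ((0:_M I))_w$, i.e., $x$ lies in the injective envelope $E((0:_M I))$ with $Jx \subseteq (0:_M I)$ for some $J \in \GV(R)$. By identifying $E((0:_M I))$ with a submodule of $E(M)$, we may regard $x$ as an element of $E(M)$ with $Jx \subseteq (0:_M I) \subseteq M$. Since $M$ is a $w$-module, this forces $x \in M_w = M$.

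It remains to show $Ix = 0$. From $Jx \subseteq (0:_M I)$ we get $I(Jx) = 0$, hence $J(Ix) = 0$. Thus $Ix$ is a $\GV$-torsion submodule of $M$, and by $\GV$-torsion-freeness of $M$ we conclude $Ix = 0$, so $x \in (0:_M I)$.

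There is no serious obstacle here; the argument is essentially a two-line manipulation exploiting that $J$ annihilates $Ix$ and that $M$ carries no $\GV$-torsion. The only point requiring mild care is the identification $E((0:_M I)) \hookrightarrow E(M)$, which follows from injectivity of $E(M)$ applied to the inclusion $(0:_M I) \hookrightarrow M$, so that the $w$-envelope of $(0:_M I)$ can indeed be compared with $M_w = M$ inside a common ambient injective module.
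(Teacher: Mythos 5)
Your proof is correct. The core computation — $Jx\subseteq(0:_MI)$ forces $J(Ix)=0$, and since $Ix\subseteq M$ with $M$ $\GV$-torsion-free, $Ix=0$ — is the same one the paper uses, but your packaging is different. The paper first treats the principal case $I=Rr$: it verifies that for $m\in M$, $Jm\subseteq(0:_Mr)$ implies $m\in(0:_Mr)$, and then invokes a cited criterion (\cite[Theorem 6.1.16]{fk16}) to conclude that $(0:_Mr)$ is a $w$-submodule; it then handles a general $I$ by writing $(0:_MI)=\bigcap_{r\in I}(0:_Mr)$ and appealing to the cited fact that intersections of $w$-submodules of a $w$-module are $w$-submodules (\cite[Corollary 6.5(1)]{K08}). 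You instead argue uniformly for arbitrary $I$ by unwinding the definition of the $w$-envelope directly inside $E(M)$, using the identification of an injective hull of $(0:_MI)$ with a submodule of $E(M)$ to show $(0:_MI)_w\subseteq M_w=M$, and then killing $Ix$ by $\GV$-torsion-freeness. Your route is self-contained (it essentially reproves the submodule criterion in passing rather than citing it), avoids the reduction to principal ideals and the intersection lemma, and is slightly longer for it; the paper's route is shorter on the page at the cost of two external references. The only point you flagged — choosing the injective hull of $(0:_MI)$ inside $E(M)$ — is handled correctly by essentiality, so there is no gap.
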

\begin{proof} First assume $I=Rr$ is a principal ideal of $R$. Suppose $Jm\subseteq (0:_Mr)$, where $J\in \GV(R)$ and $m\in M$. Then $Jmr=0$, and so $mr=0$ since $M$ is $\GV$-torsion-free. Hence $m\in (0:_Mr)$, and so $(0:_Mr)$ is a $w$-submodule of $M$ by \cite[Theorem 6.1.16]{fk16}. Now assume $I$ is an arbitrary ideal of $R$. Then $(0:_MI)=\bigcap\limits_{r\in I}(0:_Mr)$.  So $(0:_MI)$ is a $w$-submodule of $M$ by \cite[Corollary 6.5(1)]{K08}.
\end{proof}

Recall some  notations from \cite[Chapter 18]{G86}  on  $\tau_w$-cofinitely generated modules, where the hereditary torsion theory $\tau_w$ is induced
by the Gabriel topology
$\mathcal{F} := \{I | I$ is an ideal of $R$ with $I_w = R\}$. Let $M$ be an $R$-module and $N$  an $R$-submodule of $M$. Then $N$ is said to be  \emph{$\tau_w$-pure}  in $M$ if $M/N$ is $\GV$-torsion-free.
An $R$-module $M$ is said to be  \emph{$\tau_w$-cofinitely generated} if for any set
$\{M_i | i\in \Omega\}$ of $\tau_w$-pure submodules of $M$ satisfying $\bigcap\limits_{i\in \Omega}M_i = \tor(M)$, there exists a finite
subset $\Omega_0$ of $\Omega$ such that $\bigcap\limits_{i\in \Omega_0}M_i = \tor(M)$.

\begin{lemma}\cite[Lemma 2.9]{zkh21}\label{tw-prop} The following statements hold.
 \begin{enumerate}
\item  A submodule of a $\tau_w$-cofinitely generated R-module is $\tau_w$-cofinitely generated.
\item   If the sequence $0 \rightarrow A\rightarrow  B\rightarrow  C\rightarrow  0$ is exact with $A$ and $C$ $\tau_w$-cofinitely generated, then
B is $\tau_w$-cofinitely generated.
 \end{enumerate}
\end{lemma}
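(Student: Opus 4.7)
The strategy is to prove both parts using $\tau_w$-closures, together with the reduction that a module $M$ is $\tau_w$-cofinitely generated if and only if $M/\tor(M)$ is. Throughout, I will use that every $\tau_w$-pure submodule contains $\tor(M)$ and that, for any submodule $N\subseteq M$, the assignment $\overline{N}^{M}=\pi^{-1}(\tor(M/N))$, with $\pi\colon M\to M/N$, produces a $\tau_w$-pure submodule of $M$ for which $M/\overline{N}^{M}$ is $\GV$-torsion-free.

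For part (1), let $A\subseteq B$ with $B$ $\tau_w$-cofinitely generated. By the equivalence above I may replace $B$ by $B/\tor(B)$ and $A$ by $A/\tor(A)$, and hence assume $B$ (and thus $A$) is $\GV$-torsion-free. Given a family $\{A_i\}_{i\in\Omega}$ of $\tau_w$-pure submodules of $A$ with $\bigcap_{i\in\Omega}A_i=0$, I would form $B_i=\overline{A_i}^{B}$ in $B$ and also the closure $\overline{A}^{B}$. The equality $B_i\cap A=A_i$ holds because $A/A_i$ is $\GV$-torsion-free: any $a\in A$ with $Ja\subseteq A_i$ for some $J\in\GV(R)$ already lies in $A_i$. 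The decisive computation is $\bigl(\bigcap_{i\in\Omega}B_i\bigr)\cap\overline{A}^{B}=0$: for $b$ in the intersection pick $J\in\GV(R)$ with $Jb\subseteq A$, so $Jb\subseteq A\cap B_i=A_i$ for every $i$, hence $Jb\subseteq\bigcap_i A_i=0$, forcing $b=0$ by $\GV$-torsion-freeness of $B$. Applying the hypothesis on $B$ to the enlarged family $\{B_i\}_{i\in\Omega}\cup\{\overline{A}^{B}\}$ yields a finite $\Omega_0\subseteq\Omega$ with $\overline{A}^{B}\cap\bigcap_{i\in\Omega_0}B_i=0$, and intersecting with $A$ gives $\bigcap_{i\in\Omega_0}A_i=0$.

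For part (2), take an exact sequence $0\to A\to B\to C\to 0$ with $A$ and $C$ $\tau_w$-cofinitely generated, and let $\{B_i\}_{i\in\Omega}$ be $\tau_w$-pure submodules of $B$ with $\bigcap_{i\in\Omega}B_i=\tor(B)$. The submodules $B_i\cap A$ are $\tau_w$-pure in $A$ with total intersection $A\cap\tor(B)=\tor(A)$, so the hypothesis on $A$ yields a finite $\Omega_1\subseteq\Omega$ with $\bigcap_{i\in\Omega_1}(B_i\cap A)=\tor(A)$. Set $B_0=\bigcap_{i\in\Omega_1}B_i$; then $B_0$ is $\tau_w$-pure in $B$ and satisfies $B_0\cap A=\tor(A)$, so $B_0/\tor(A)$ embeds into $C$ and is therefore $\tau_w$-cofinitely generated by part (1). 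Since $\tor(A)\subseteq\tor(B_0)$ gives $\tor(B_0/\tor(A))=\tor(B_0)/\tor(A)$, and hence $(B_0/\tor(A))/\tor(B_0/\tor(A))=B_0/\tor(B_0)$, two applications of the equivalence $M$ is $\tau_w$-cofinitely generated iff $M/\tor(M)$ is force $B_0$ itself to be $\tau_w$-cofinitely generated. Finally, $\{B_i\cap B_0\}_{i\in\Omega}$ is a family of $\tau_w$-pure submodules of $B_0$ whose intersection equals $\tor(B)=\tor(B_0)$, so the hypothesis on $B_0$ yields a finite $\Omega_2$ with $\bigcap_{i\in\Omega_2}(B_i\cap B_0)=\tor(B)$, and $\Omega_1\cup\Omega_2$ is the required finite subset.

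The main obstacle I expect is in part (1): naively replacing $\{A_i\}$ by $\{\overline{A_i}^{B}\}$ only controls the intersection after meeting $A$, which is insufficient to invoke the hypothesis on $B$. Adjoining the single closure $\overline{A}^{B}$ and then using $\GV$-torsion-freeness to pass from $Jb\subseteq\bigcap_i A_i=0$ to $b=0$ is the key maneuver. Part (2) is then essentially a two-step application of (1), provided the bookkeeping identities $\tor(A)=A\cap\tor(B)$ and $\tor(A)\subseteq\tor(B_0)$ are kept in view.
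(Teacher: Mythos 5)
Your proof is correct. Note that the paper itself gives no argument for this lemma: it is imported verbatim as \cite[Lemma 2.9]{zkh21}, so there is no in-text proof to compare against. Your write-up is a valid self-contained derivation: the reduction to $M/\tor(M)$ is legitimate because the $\tau_w$-pure submodules of $M$ all contain $\tor(M)$ and correspond bijectively to those of $M/\tor(M)$ with matching intersections; in part (1) the key point is exactly the one you flag, namely that the closures $\overline{A_i}^{B}$ alone do not have zero intersection, and adjoining the single $\tau_w$-pure submodule $\overline{A}^{B}$ repairs this via $Jb\subseteq A\cap B_i=A_i$ and $\GV$-torsion-freeness of $B$; and in part (2) the identities $B_0\cap A=\tor(A)$, $\tor(B_0)=\tor(B)$, and $\tor(B_0/\tor(A))=\tor(B_0)/\tor(A)$ that you invoke all hold (the last because $\GV$-torsion is a hereditary torsion theory, so quotients by submodules of the torsion part behave as stated). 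The only cosmetic imprecision is that the finite subfamily produced in part (1) need not a priori contain $\overline{A}^{B}$, but enlarging it by that one member costs nothing, so the conclusion stands.
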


\begin{lemma}\cite[Proposition 2.10]{zkh21}\label{tw-w-cof}
 The following statements are equivalent for a $\GV$-torsion-free $R$-module $M.$
\begin{enumerate}
\item  $M_w$ is $w$-cofinitely generated.
\item  $M_w$ is $\tau_w$-cofinitely generated.
\item  $M$ is $\tau_w$-cofinitely generated.
 \end{enumerate}
\end{lemma}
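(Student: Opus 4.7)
The plan is to prove the two implications separately.

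For $(1)\Rightarrow(2)$, assuming $M$ is $w$-Artinian, $M$ is in particular $w$-cofinitely generated. For every prime $w$-ideal $\p$ with $(0:_RM)\subseteq\p$ I take $N^\p:=(0:_M\p)$, which is a $w$-submodule of $M$ by Lemma~\ref{ann-w}. Since quotients of $w$-Artinian $w$-modules are again $w$-Artinian, $(M/N^\p)_w$ is $w$-Artinian and hence $w$-cofinitely generated. The containment $(M[\p])_w\subseteq N^\p$ is automatic because $M[\p]\subseteq(0:_M\p)=N^\p$ and $N^\p$ is a $w$-module.

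For $(2)\Rightarrow(1)$, I plan to apply \cite[Theorem 4.10]{zkh21}. Since $M$ is $w$-cofinitely generated by hypothesis, it remains to show that $(M/(0:_M\p))_w$ is $w$-cofinitely generated for every prime $w$-ideal $\p\supseteq(0:_RM)$; by Lemma~\ref{tw-w-cof} this is equivalent to $M/(0:_M\p)$ being $\tau_w$-cofinitely generated. To prove the latter, I start from an arbitrary family $\{L_i:i\in\Omega\}$ of $\tau_w$-pure submodules of $M$ containing $(0:_M\p)$ with $\bigcap_iL_i=(0:_M\p)$, and augment it inside $M/N^\p$ by $\{s^{-1}N^\p/N^\p:s\in R\setminus\p\}$, where $s^{-1}N^\p:=\{m\in M:sm\in N^\p\}$ is a $w$-submodule (because $N^\p$ is) and is $\tau_w$-pure in $M$ (because multiplication by $s$ realises $M/s^{-1}N^\p$ as a submodule of the $\GV$-torsion-free module $M/N^\p$). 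The intersection of the augmented family is zero in $M/N^\p$: indeed
\begin{equation*}
(0:_M\p)\cap\bigcap_{s\in R\setminus\p}s^{-1}N^\p=N^\p,
\end{equation*}
since $1\in R\setminus\p$. Hypothesis (2) together with Lemma~\ref{tw-w-cof} make $M/N^\p$ itself $\tau_w$-cofinitely generated, so some finite subfamily already has intersection zero: there exist $L_{i_1},\ldots,L_{i_k}$ and $s_1,\ldots,s_l\in R\setminus\p$ with $\bigl(\bigcap_jL_{i_j}\bigr)\cap\bigl(\bigcap_ns_n^{-1}N^\p\bigr)=N^\p$.

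The main obstacle will be to upgrade this finite identity to the sharper equality $\bigcap_jL_{i_j}=(0:_M\p)$ that the $\tau_w$-cofinite generation of $M/(0:_M\p)$ actually requires. This is where the hypothesis $(M[\p])_w\subseteq N^\p$ becomes decisive: since $M[\p]=\bigcap_{s\notin\p}s(0:_M\p)$, this hypothesis prevents nonzero $(R\setminus\p)$-divisible elements in $(0:_M\p)/N^\p$, and combined with the primality of $\p$---so that the product $s:=s_1\cdots s_l$ lies in $R\setminus\p$ and satisfies $sK\subseteq N^\p$ for $K:=\bigcap_ns_n^{-1}N^\p$---this reducedness will let me absorb the extra factor $K$ and force $\bigcap_jL_{i_j}\subseteq(0:_M\p)$. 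I expect this final extraction to be the technical heart of the proof, paralleling the classical case treated in \cite{zkq23-1}.
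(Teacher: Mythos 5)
Your proposal does not address the statement at all. The statement to be proved is Lemma~\ref{tw-w-cof}: for a $\GV$-torsion-free module $M$, the conditions ``$M_w$ is $w$-cofinitely generated'', ``$M_w$ is $\tau_w$-cofinitely generated'' and ``$M$ is $\tau_w$-cofinitely generated'' are equivalent. What you have written is instead an outline of the equivalence $(1)\Leftrightarrow(2)$ of Theorem~\ref{main} (the paper's main Cohen-type theorem for $w$-Artinian modules): your items $(1)$ and $(2)$ are the $w$-Artinian condition and the condition involving prime $w$-ideals $\p$, the submodules $N^\p$ with $(M[\p])_w\subseteq N^\p\subseteq(0:_M\p)$, and so on. None of that is relevant to the lemma in question, which is a purely torsion-theoretic comparison of two finiteness conditions and makes no mention of Artinian-ness or prime ideals. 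Worse, your argument explicitly invokes ``Lemma~\ref{tw-w-cof}'' twice as a tool, so even if it were repurposed as a proof of that lemma it would be circular.

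For the record, the paper offers no proof of Lemma~\ref{tw-w-cof}: it is imported verbatim from \cite[Proposition 2.10]{zkh21}. A direct argument would go roughly as follows. The equivalence of $(2)$ and $(3)$ comes from the correspondence $N\mapsto N_w$ (inside $M_w$) between $\tau_w$-pure submodules of $M$ and those of $M_w$, which preserves intersections up to $\GV$-torsion; the equivalence of $(1)$ and $(2)$ comes from the fact that for the $w$-module $M_w$ the $\tau_w$-pure submodules with zero torsion intersection are exactly the $w$-submodules with zero intersection. If you want to attempt the lemma yourself, that is the shape of the argument you need, not the Zorn's-lemma machinery of the main theorem.
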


\begin{corollary}\label{w-cof-prop}
Let $0\rightarrow A\rightarrow B\rightarrow C\rightarrow 0$ be a short exact sequence of $\GV$-torsion-free $R$-modules. Then the following statements hold.
\begin{enumerate}
\item  If $B_w$ is  $w$-cofinitely generated, so is  $A_w$.
\item  If $A_w$ and  $C_w$ are $w$-cofinitely generated, so is $B_w$.
 \end{enumerate}
\end{corollary}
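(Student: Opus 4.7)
The plan is to use Lemma \ref{tw-w-cof} as a bridge between the conditions ``$M_w$ is $w$-cofinitely generated'' and ``$M$ is $\tau_w$-cofinitely generated'' for any $\GV$-torsion-free $R$-module $M$, and then to invoke the already-established closure properties of $\tau_w$-cofinitely generated modules from Lemma \ref{tw-prop}. In effect, the corollary should be a one-step translation of Lemma \ref{tw-prop} across the equivalence furnished by Lemma \ref{tw-w-cof}.

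For (1), I would begin by observing that $B$ is $\GV$-torsion-free, so the hypothesis that $B_w$ is $w$-cofinitely generated is equivalent, via Lemma \ref{tw-w-cof}, to $B$ itself being $\tau_w$-cofinitely generated. Since $A$ embeds in $B$ through the given short exact sequence, Lemma \ref{tw-prop}(1) immediately yields that $A$ is $\tau_w$-cofinitely generated. As $A$ is also $\GV$-torsion-free, a second application of Lemma \ref{tw-w-cof}, now in the reverse direction, gives that $A_w$ is $w$-cofinitely generated, which is what we need.

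For (2), the same bridge converts the hypotheses into the statements that $A$ and $C$ are both $\tau_w$-cofinitely generated. Applying Lemma \ref{tw-prop}(2) to the given short exact sequence $0 \to A \to B \to C \to 0$ then produces that $B$ is $\tau_w$-cofinitely generated. Since $B$ is $\GV$-torsion-free, one last use of Lemma \ref{tw-w-cof} delivers that $B_w$ is $w$-cofinitely generated.

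There is essentially no obstacle: the argument is a purely formal pass-through across the equivalence in Lemma \ref{tw-w-cof}, and nothing beyond the hypotheses of Lemma \ref{tw-prop} is needed. The only bookkeeping item worth noting is that all three modules $A$, $B$, $C$ are required to be $\GV$-torsion-free so that Lemma \ref{tw-w-cof} applies to each of them individually; this is guaranteed by the assumption that the given short exact sequence consists of $\GV$-torsion-free modules.
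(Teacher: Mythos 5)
Your proposal is correct and is precisely the argument the paper intends: the paper's proof simply cites Lemma \ref{tw-prop} and Lemma \ref{tw-w-cof}, and you have spelled out the translation step (using Lemma \ref{tw-w-cof} as a bridge to and from $\tau_w$-cofinite generation, with Lemma \ref{tw-prop} supplying the closure properties) in the natural way.
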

\begin{proof} These follow by Lemma \ref{tw-prop} and Lemma \ref{tw-w-cof}.
\end{proof}

Let $R$ be a ring, $\p$ be a prime ideal of $R$, and $M$ an $R$-module. Following \cite{zkq23-1}, set $$M[\p]=\bigcap\limits_{s\in R \setminus \p}s(0:_M\p).$$ Then $M[\p]$ is a submodule of $M$. We are ready to state and prove the main result of this note.

\begin{theorem}\label{main} Let $R$ be a ring and $M$ a $w$-module. Then the following statements are equivalent:
 \begin{enumerate}
\item $M$ is $w$-Artinian;
 \item  $M$ is $w$-cofinitely generated and  for every prime $w$-ideal $\p$ of $R$ with $(0:_RM)\subseteq \p$, there exists a $w$-submodule $N^\p$ of $M$  such that $(M/N^\p)_w$ is $w$-cofinitely generated and $(M [\p])_w\subseteq N^\p\subseteq (0:_M\p)$.
 \end{enumerate}
\end{theorem}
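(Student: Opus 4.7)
The plan is to prove both implications using the Cohen-type characterization of Zhou--Kim--Hu \cite[Theorem 4.10]{zkh21}, which states that a $w$-module $M$ is $w$-Artinian if and only if $M$ is $w$-cofinitely generated and $(M/(0:_M\p))_w$ is $w$-cofinitely generated for every prime $w$-ideal $\p$ of $R$. For the implication (1) $\Rightarrow$ (2), the argument is short: $w$-Artinianness already supplies the $w$-cofinite generation of both $M$ and each $(M/(0:_M\p))_w$, and one may simply set $N^\p := (0:_M\p)$, which is a $w$-submodule by Lemma \ref{ann-w} and contains $(M[\p])_w$ automatically, since the inclusion $M[\p]\subseteq (0:_M\p)$ is trivial from the definition and $(0:_M\p)$ is already a $w$-submodule.

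For (2) $\Rightarrow$ (1), I plan to verify the remaining hypothesis of \cite[Theorem 4.10]{zkh21}, namely that $(M/(0:_M\p))_w$ is $w$-cofinitely generated for every prime $w$-ideal $\p$. By the standard reduction to the faithful case (passing to $R/(0:_RM)$), one may assume every prime $w$-ideal $\p$ of $R$ satisfies $(0:_RM)\subseteq\p$, so the hypothesised $w$-submodule $N^\p$ with $(M[\p])_w\subseteq N^\p\subseteq (0:_M\p)$ and $(M/N^\p)_w$ $w$-cofinitely generated is at hand. Using Lemma \ref{ann-w} to see that $(0:_M\p)$ is itself a $w$-submodule, one obtains a short exact sequence
\[
0 \to (0:_M\p)/N^\p \to M/N^\p \to M/(0:_M\p) \to 0
\]
of $\GV$-torsion-free $R$-modules, and the task is to infer the $w$-cofinite generation of $(M/(0:_M\p))_w$ from that of $(M/N^\p)_w$.

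The main obstacle is that Corollary \ref{w-cof-prop} transfers $w$-cofinite generation to submodules and across extensions but not to quotients in general, so the conclusion cannot be read off directly from $(M/N^\p)_w$. Overcoming this requires the crucial extra information that $N^\p\supseteq (M[\p])_w$, together with the description $M[\p]=\bigcap_{s\in R\setminus\p}s(0:_M\p)$. The strategy I envisage is as follows: supposing for contradiction that $(M/(0:_M\p))_w$ fails to be $w$-cofinitely generated, I would apply the inverse-system characterization \cite[Proposition 2.11]{zkh21} to produce an inverse system of nonzero $w$-submodules of $(M/(0:_M\p))_w$ with zero intersection, and then lift it through the surjection $M/N^\p\twoheadrightarrow M/(0:_M\p)$ by multiplying representatives by suitable elements of $R\setminus\p$, using the $\p$-divisibility encoded in $M[\p]\subseteq N^\p$ to ensure that residues in the kernel $(0:_M\p)/N^\p$ are absorbed. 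The resulting inverse system in $(M/N^\p)_w$ would intersect to zero, contradicting its $w$-cofinite generation. Carrying out this lifting precisely, as the $w$-analogue of the classical descending argument in \cite[Theorem 2.1]{zkq23-1}, is the technical heart of the proof.
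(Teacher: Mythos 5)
Your argument for $(1)\Rightarrow(2)$ matches the paper exactly (set $N^\p:=(0:_M\p)$ and invoke \cite[Theorem 4.10]{zkh21}), so that direction is fine.

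For $(2)\Rightarrow(1)$, however, there is a genuine gap, and the route you propose is not the one the paper takes. You aim to verify directly that $(M/(0:_M\p))_w$ is $w$-cofinitely generated for each prime $w$-ideal $\p$ and then cite \cite[Theorem 4.10]{zkh21}, using the exact sequence $0\to (0:_M\p)/N^\p\to M/N^\p\to M/(0:_M\p)\to 0$. As you yourself note, Corollary~\ref{w-cof-prop} only passes $w$-cofinite generation to submodules and extensions, not to quotients, so this cannot be read off. The repair you sketch — lifting an inverse system $\{L_i\}$ of nonzero $w$-submodules of $(M/(0:_M\p))_w$ with zero intersection to $(M/N^\p)_w$ by ``multiplying representatives by suitable elements of $R\setminus\p$'' — does not close the gap. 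The preimages $\tilde L_i$ in $(M/N^\p)_w$ all contain $(0:_M\p)/N^\p$, so their intersection is not zero, and multiplying a fixed $\tilde L_i$ by a single $s\in R\setminus\p$ does not fix this: the hypothesis $M[\p]=\bigcap_{s\in R\setminus\p}s(0:_M\p)\subseteq N^\p$ only controls elements that are simultaneously divisible (inside $(0:_M\p)$) by \emph{every} $s\in R\setminus\p$, not the image of a single such multiplication. There is no evident way to convert the lifted family into an inverse system of nonzero $w$-submodules of $(M/N^\p)_w$ with zero intersection, so the contradiction is not produced.

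What the paper actually does is a minimal-counterexample argument that avoids ever attempting to pass $w$-cofinite generation to a quotient. Assuming $M$ is not $w$-Artinian, it takes the set $\Gamma$ of $w$-submodules $N$ with $(M/N)_w$ not $w$-cofinitely generated, applies Zorn's Lemma (using closure of $w$-modules under inverse limits and \cite[Proposition 2.11]{zkh21}) to get an $N$ maximal for reverse inclusion, proves $\p:=(0:_RN)$ is a prime $w$-ideal and that $N\subseteq(M[\p])_w$, and — crucially — first reduces via \cite[Lemma 7]{N06} to the situation where $N$ sits inside a $w$-submodule $(0:_MI)$ which is itself $w$-Artinian. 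That reduction is what makes $((0:_M\p)/N)_w$, and hence $(N^\p/N)_w$, $w$-cofinitely generated; then the extension $0\to N^\p/N\to M/N\to M/N^\p\to 0$ gives the contradiction by Corollary~\ref{w-cof-prop}(2). Your sketch is missing all three ingredients — the Zorn's Lemma selection of $N$, the identification of $\p=(0:_RN)$ as a prime $w$-ideal together with $N\subseteq(M[\p])_w$, and the reduction to a $w$-Artinian $(0:_MI)$ controlling the kernel — and without them the step you flag as ``the technical heart'' remains unproved.
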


\begin{proof} $(1)\Rightarrow (2)$ Assume that the $w$-module $M$ is a $w$-Artinian $R$-module.  Let $\p$ be a prime $w$-ideal with $(0:_RM)\subseteq \p$.  Set $N^\p:=(0:_M\p)$. Then $N^\p$ is a $w$-submodule of $M$  by Lemma \ref{ann-w}. It follows by \cite[Theorem 4.10]{zkh21} that $(M/N^\p)_w$ is $w$-cofinitely generated with $M[\p]\subseteq N^\p\subseteq (0:_M\p)$.

$(2)\Rightarrow (1)$ On contrary, suppose that $M$ is not $w$-Artinian. Then there exists  a $w$-submodule $N'$ of $M$ such that $(M/N')_w$ is not $w$-cofinitely generated by \cite[Theorem 4.7]{zkh21}. Consider the set $$\Gamma:=\{N \mid N ~\mbox{is a} ~w\mbox{-submodule of}~M ~\mbox{and}~(M/N)_w ~\mbox{is not $w$-cofinitely generated}\}.$$ Then $\Gamma$ is not empty since $N'\in \Gamma$. Make a partial order on $\Gamma$ by the opposite of inclusion, that is, $N_1\geq N_2$ if and only if $N_1\subseteq N_2$ in $\Gamma$. We will prove the following three claims.

{\bf Claim 1: There exists a maximal element $N\in \Gamma$.}  Let $\{N_i\mid i\in \Lambda\}$ be a total ordered subset of $\Gamma$. Set $N=\bigcap\limits_{i\in \Lambda}N_i$. Then $N$ is a $w$-module by \cite[Theorem 11]{DF16}. We claim that $(M/N)_w$ is not $w$-cofinitely generated. Indeed, since $\{N_i\mid i\in \Lambda\}$ is a total ordered, we have $\{(N_j/N)_w\}_{j\in\Lambda}$ is an  inverse system of submodules of $(M/N)_w$. By \cite[Proposition 2.11]{zkh21}, there are two possibilities: either $(N_j/N)_w=0$ for some $j\in \Lambda$, or $(M/N)_w$ is not $w$-cofinitely generated. In the former case, $N=N_j$ and so $(M/N)_w$ is not $w$-cofinitely generated. Hence both cases imply that $(M/N)_w$ is not $w$-cofinitely generated, and so the totally ordered subset of $\Gamma$ is bounded above by $N$. Consequently, by Zorn's Lemma, $\Gamma$ has a maximal element, which is also denoted by $N$.
Set $$\p=(0:_RN).$$

{\bf Claim 2: $\p$ is a prime $w$-ideal of $R$.} It follows by \cite[Proposition 6.1.20]{fk16} that $\p$ is a  $w$-ideal of $R$. Next we will show  $\p$ is a prime ideal of $R$. Indeed, let $a\not\in \p, b\not\in \p$ be elements in $R$. Then $(0:_Na)\subsetneq N$.  Since $(0:_Na)$ is a $w$-module by Lemma \ref{ann-w}, $(M/(0:_Na))_w$ is $w$-cofinitely generated by the maximality of $N$. So the submodule $((0:_Ma)/(0:_Na))_w$ is also $w$-cofinitely generated by Corollary \ref{w-cof-prop}(1).  Consider the exact sequence $$0\rightarrow (0:_Ma)/(0:_Na)\rightarrow M/N\rightarrow aM/aN\rightarrow 0.$$
Since $(M/N)_w$ is  not $w$-cofinitely generated and $((0:_Ma)/(0:_Na))_w$ is $w$-cofinitely generated, $aM/aN$ is not $\tau_w$-cofinitely generated by Lemma \ref{tw-prop}.

{\bf Subclaim:  $(aM+(aN)_w)/(aN)_w$ is also  not $\tau_w$-cofinitely generated.} We will show it by proof by contradiction.
For any $\{M'_i/aN | i\in \Omega\}$ of $\tau_w$-pure submodules of $aM/aN$ with  $\bigcap\limits_{i\in \Omega}M'_i/aN = \tor(aM/aN)$, where $M'_i\subseteq aM$, then $\{M'_i+(aN)_w /(aN)_w | i\in \Omega\}$ of $\tau_w$-pure submodules of $(aM+(aN)_w)/(aN)_w$ satisfying $\bigcap\limits_{i\in \Omega}(M'_i+(aN)_w)/(aN)_w = \tor((aM+(aN)_w)/(aN)_w)=0$. Indeed, let $x+(aN)_w\in \bigcap\limits_{i\in \Omega}(M'_i+(aN)_w)/(aN)_w$ with $x\in \bigcap\limits_{i\in \Omega}(M'_i+(aN)_w)$. Then there exists $J\in \GV(R)$ such that $Jx\subseteq (aN)_w$. And so $x\in (aN)_w$. Suppose, on the contrary, that $(aM+(aN)_w)/(aN)_w$ is  $\tau_w$-cofinitely generated. Then  there exists a finite subset $\Omega_0\subseteq \Omega$ such that $\bigcap\limits_{i\in \Omega_0}(M'_i+(aN)_w)/(aN)_w=0$, that is, $\bigcap\limits_{i\in \Omega_0}M'_i\subseteq (aN)_w$. We will show  $$\bigcap\limits_{i\in \Omega_0}M'_i/aN = \tor(aM/aN),$$ which  contradicts that $aM/aN$ is not $\tau_w$-cofinitely generated. Indeed, since $\Omega_0$ is finite, there exists $J\in \GV(R)$ such that $J\bigcap\limits_{i\in \Omega_0}M'_i\subseteq aN$. Hence $\bigcap\limits_{i\in \Omega_0}M'_i/aN = \tor(aM/aN)$.

Now we are ready to prove that $\p$ is a prime ideal of $R$. By Lemma \ref{tw-w-cof}, we have $((aM+(aN)_w)/(aN)_w)_w$ is not $w$-cofinitely generated. Thus $(M/(a N)_w)_w$ is  not $w$-cofinitely generated by Corollary \ref{w-cof-prop}(1). So $(aN)_w=N$ by the maximality of $N$. Similarly, we have $(bN)_w=N$. Hence, by \cite[Theorem 6.2.2]{fk16}, $(abN)_w=(a(bN)_w)_w=(aN)_w=N\not=0$ as $M$ is  $w$-cofinitely generated. So $abN\not=0$, and hence $ab\not\in\p$.

{\bf Claim 3: $N\subseteq (M[\p])_w$.} Indeed, suppose that there is  $y\in N$ such that $y\not\in  (M[\p])_w$. Then for any $J\in \GV(R)$, $Jy\not\subseteq M[\p]=\bigcap\limits_{s\in R \setminus \p}s(0:_M\p)$. And so $Jy\not\subseteq (s(0:_M\p))_w$ for some $s\in R \setminus \p$, that is, $y\not\in (s(0:_M\p))_w$ for some $s\in R \setminus \p$.  It follows that $(sN)_w\subseteq (s(0:_M\p))_w\subsetneq N$. And hence $(M/(sN)_w)_w$ is  $w$-cofinitely generated by the maximality of $N$. Since $s\not\in\p$, we have $(0:_Ns)\subsetneq N$.  So $(M/ (0:_Ns))_w$ is also $w$-cofinitely generated by the maximality of $N$. Consider the exact sequence $$0\rightarrow (0:_Ms)/ (0:_Ns)\rightarrow M/N\rightarrow sM/sN\rightarrow 0.$$
Since $(M/(sN)_w)_w$ is  $w$-cofinitely generated, the submodule $((sM+(sN)_w)/(sN)_w)_w$ is also $w$-cofinitely generated by Corollary \ref{w-cof-prop}. By the proof of  {\bf subclaim} in that of {\bf Claim 2}, $sM/sN$ is $\tau_w$-cofinitely generated. Since $(M/ (0:_Ns))_w$ is $w$-cofinitely generated, the submodule $((0:_Ms)/ (0:_Ns))_w$ is also $w$-cofinitely generated. Hence $(M/N)_w$ is $w$-cofinitely generated, which is a contradiction.

Finally, we will show that $M$ is $w$-Artinian. Suppose that the $w$-cofinitely generated  $R$-module $M$ is not $w$-Artinian. Then, by \cite{zkh21}, there is a $w$-ideal $I$ of $R$ such that $(0:_MI)$ is $w$-Artinian and $(M/(0:_MI))_w$ is not $w$-cofinitely generated by \cite[Lemma 7]{N06}. Furthermore there is a $w$-submodule  $N$ of $(0:_MI)$ such that $(M/N)_w$ is not $w$-cofinitely generated and $\p=(0:_RN)$ is a  prime $w$-ideal by {\bf Claim 1} and {\bf Claim 2}. Since $N\subseteq (0:_MI)$, we have  $(0:_M\p)\subseteq (0:_MI)$. Thus $((0:_M\p)/N)_w$ is $w$-Artinian, and hence is $w$-cofinitely generated. Since  $(0:_RM)\subseteq \p$, there is a $w$-submodule $N^\p$ of $M$  such that $(M/N^\p)_w$ is $w$-cofinitely generated with $N\subseteq (M[\p])_w\subseteq N^\p\subseteq (0:_M\p)$ by assumption and {\bf Claim 3}. Then the submodule $(N^{\p}/N)_w$ of $((0:_M\p)/N)_w$ is $w$-cofinitely generated by Corollary \ref{w-cof-prop}. Consider the following exact sequence $$0\rightarrow N^{\p}/N\rightarrow M/N\rightarrow M/N^{\p}\rightarrow 0.$$
Since $(M/N^{\p})_w$ and $(N^{\p}/N)_w$ are $w$-cofinitely generated, $(M/N)_w$ is also $w$-cofinitely generated, which is a contradiction. Therefore $M$ is $w$-Artinian.
\end{proof}

Let $R$ be a ring, $r\in R$ and $M$ a $w$-module over $R$. In the proof of Theorem \ref{main}, we often consider $R$-modules of the form $(rM)_w$ rather than $rM$. Indeed, $(rM)_w\not=rM$, that is, $rM$ is not a $w$-module in general. In fact, the following example shows that  a principal ideal $rR$ need not be a $w$-module even for Noetherian rings $R$, and so the $w$-operation is a star operation rather than a principally star operation in general .

\begin{example}\label{prin-not-w}  Let $D=\mathbb{Q}[x_1,x_2,r,a,b,c,d]$ be a polynomial ring over the field $\mathbb{Q}$ of rational numbers  with $7$ variables. Set $R=\mathbb{Q}[x_1,x_2,r,a,b,c,d]/I$, where $$I=\langle cr-ax_1, cx_2-d-bx_1, rd, ax_2-br, ca-rx_1, r^2-a^2, rx_2-ab\rangle.$$  Let $J=\langle \overline{x_1},\overline{x_2}\rangle$ be an ideal of $R$. One can verify that  $\overline{x_1},\overline{x_2}$ is an $R$-regular sequence by Magama. So the depth of $J$ is 2, and hence $J$ is a $\GV$-ideal of $R$ by \cite[Exercise 6.10]{fk16}. By  Koszul duality $($see \cite[Theorem 1.7]{M74}$)$, we have $$\Ext_R^1(R/J,R\overline{r})\cong \Tor_1^R(R/J,R\overline{r}).$$
Moreover, the latter is isomorphic to $(0:_{R/(R\overline{x_1}+(0:_R\overline{r}))}\overline{x_2}).$ Indeed, since $\overline{x_1},\overline{x_2}$ is an $R$-regular sequence, we have the following quasi-isomorphisms of complexes:
\begin{align*}
&R/J\otimes^{\mathbb{L}}_RR\overline{r}\\
\simeq &R/J\otimes^{\mathbb{L}}_{R/R\overline{x_1}}(R/R\overline{x_1}\otimes^{\mathbb{L}}_RR\overline{r})\\
\simeq &R/J\otimes^{\mathbb{L}}_{R/R\overline{x_1}} R\overline{r}/R\overline{r}\overline{x_1}\\
\simeq & R/J\otimes^{\mathbb{L}}_{R/R\overline{x_1}} R/(R\overline{x_1}+(0:_R\overline{r}))\\
\simeq &[0\rightarrow R/R\overline{x_1}\xrightarrow{\cdot \overline{x_2}}R/R\overline{x_1}\rightarrow 0]\otimes^{\mathbb{L}}_{R/R\overline{x_1}} R/(R\overline{x_1}+(0:_R\overline{r})).
 \end{align*}
Hence, $$\Tor_1^R(R/J,R\overline{r})\cong \Ker([R/(R\overline{x_1}+(0:_R\overline{r}))\xrightarrow{\cdot \overline{x_2}}R/(R\overline{x_1}+(0:_R\overline{r}))])= (0:_{R/(R\overline{x_1}+(0:_R\overline{r}))}\overline{x_2}).$$
Claim that $(0:_{R/(R\overline{x_1}+(0:_R\overline{r}))}\overline{x_2})\not=0$. In fact, $c\not\in R\overline{x_1}+(0:_R\overline{r})$, but $cx_2\in R\overline{x_1}+(0:_R\overline{r})$. So $c$ is a nonzero element in  $(0:_{R/(R\overline{x_1}+(0:_R\overline{r}))}\overline{x_2})$. Consequently, $\Ext_R^1(R/J,R\overline{r})\not=0$. Hence the principal ideal $R\overline{r}$ is not a $w$-ideal of $R$. Note that we verify by the Magma calculation program that $\overline{x_1},\overline{x_2}$ is an $R$-regular sequence, $c\not\in R\overline{x_1}+(0:_R\overline{r})$, and $cx_2\in R\overline{x_1}+(0:_R\overline{r})$ in the final {\rm \textbf{Appendix}}.
\end{example}

Obviously, we can deduce the following corollaries by Theorem \ref{main}.
\begin{corollary} Let $R$ be a ring. A $w$-module $M$ over $R$ is $w$-Artinian if and only if $M$ is $w$-cofinitely generated and  $(M/(0:_M\p))_w$ is $w$-cofinitely generated  for every prime $w$-ideal $\p$ of $R$ with $(0:_RM)\subseteq \p$.
\end{corollary}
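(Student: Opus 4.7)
The plan is to handle the two directions asymmetrically. For $(1)\Rightarrow(2)$, the natural choice is $N^{\p}:=(0:_M\p)$. By Lemma \ref{ann-w} this is a $w$-submodule of $M$, the inclusion $(M[\p])_w\subseteq N^\p\subseteq (0:_M\p)$ is automatic, and the $w$-cofinite generation of $(M/(0:_M\p))_w$ is exactly the content of the previous Cohen-type theorem \cite[Theorem 4.10]{zkh21}; the hypothesis that $M$ itself is $w$-cofinitely generated follows since $w$-Artinian modules are $w$-cofinitely generated by definition. So this direction should be essentially formal.

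For $(2)\Rightarrow(1)$, I will argue by contradiction, imitating the classical Cohen-style proof but carried out inside the $w$-operation framework. Assume $M$ is not $w$-Artinian. Using the equivalence in \cite[Theorem 4.7]{zkh21}, there exists a $w$-submodule $N'$ of $M$ with $(M/N')_w$ not $w$-cofinitely generated. Consider
\[
\Gamma:=\{\,N\mid N\text{ is a }w\text{-submodule of }M\text{ and }(M/N)_w\text{ is not }w\text{-cofinitely generated}\,\},
\]
ordered by reverse inclusion. The first step is to apply Zorn's Lemma: for a totally ordered chain $\{N_i\}_{i\in\Lambda}$ in $\Gamma$, the intersection $N=\bigcap_i N_i$ is a $w$-module by \cite[Theorem 11]{DF16}, and the inverse-system characterization of $w$-cofinite generation \cite[Proposition 2.11]{zkh21} applied to $\{(N_j/N)_w\}$ produces an upper bound (either some $(N_j/N)_w=0$, so $N=N_j\in\Gamma$, or the non-$w$-cofinite generation passes to $(M/N)_w$). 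This yields a maximal element $N\in\Gamma$, and I set $\p:=(0:_RN)$.

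The main obstacle is showing $\p$ is a prime $w$-ideal. That $\p$ is a $w$-ideal follows from \cite[Proposition 6.1.20]{fk16}. For primality, given $a,b\notin\p$ I would exploit the maximality of $N$: both $(0:_Na)$ and $(0:_Nb)$ are proper $w$-submodules of $N$ (annihilators are $w$-submodules by Lemma \ref{ann-w}), so their quotients out of $M$ are $w$-cofinitely generated. Using the short exact sequence
\[
0\to (0:_Ma)/(0:_Na)\to M/N\to aM/aN\to 0
\]
together with Corollary \ref{w-cof-prop} and Lemmas \ref{tw-prop}, \ref{tw-w-cof}, and a careful "$\tau_w$-pure descent" argument to pass between $aM/aN$ and $(aM+(aN)_w)/(aN)_w$, I would conclude that $(M/(aN)_w)_w$ is not $w$-cofinitely generated, forcing $(aN)_w=N$ by maximality; similarly $(bN)_w=N$. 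Then $(abN)_w=(a(bN)_w)_w=(aN)_w=N\ne 0$ (using that $M$ is $w$-cofinitely generated to know $N\ne 0$), so $abN\ne 0$, i.e. $ab\notin\p$. This "$(rN)_w=N$ for all $r\notin\p$" property will be the technical heart of the argument.

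Once $\p$ is a prime $w$-ideal, the third step is to show $N\subseteq (M[\p])_w$. Suppose not: then some $y\in N$ fails $Jy\subseteq s(0:_M\p)$ for every $J\in\GV(R)$ and some $s\notin\p$, giving $(sN)_w\subseteq (s(0:_M\p))_w\subsetneq N$. Maximality of $N$ then makes $(M/(sN)_w)_w$ and $(M/(0:_Ns))_w$ both $w$-cofinitely generated; combining these through the exact sequence
\[
0\to (0:_Ms)/(0:_Ns)\to M/N\to sM/sN\to 0
\]
exactly as in the primality argument, I would conclude $(M/N)_w$ is $w$-cofinitely generated, contradicting $N\in\Gamma$. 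Finally I apply hypothesis (2) to this $\p$: there is a $w$-submodule $N^\p$ with $(M/N^\p)_w$ $w$-cofinitely generated and $N\subseteq (M[\p])_w\subseteq N^\p\subseteq (0:_M\p)$. Since the intermediate quotient $((0:_M\p)/N)_w$ is $w$-Artinian (as a $w$-submodule of the $w$-Artinian module $(0:_MI)$ for a suitable $I$ supplied by \cite[Lemma 7]{N06}), hence $w$-cofinitely generated, the sub-quotient $(N^\p/N)_w$ inherits $w$-cofinite generation by Corollary \ref{w-cof-prop}(1). The exact sequence
\[
0\to N^\p/N\to M/N\to M/N^\p\to 0
\]
combined with Corollary \ref{w-cof-prop}(2) then forces $(M/N)_w$ to be $w$-cofinitely generated, the desired contradiction.
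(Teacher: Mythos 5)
Your proof is correct, but it does far more work than the statement requires, and in the process it slightly blurs what is actually being asserted. The statement you were given is the \emph{corollary} that immediately follows the paper's Theorem~2.5, not Theorem~2.5 itself. Its condition $(2)$ does not posit the existence of a submodule $N^\p$; it simply asserts that $(M/(0:_M\p))_w$ is $w$-cofinitely generated for every prime $w$-ideal $\p$ containing $(0:_RM)$. Your opening line ``the natural choice is $N^\p:=(0:_M\p)$'' and your later ``apply hypothesis $(2)$ to this $\p$: there is a $w$-submodule $N^\p$ with\ldots'' both read condition $(2)$ as if it were the theorem's condition. The argument survives because the special choice $N^\p=(0:_M\p)$ silently reconciles the two, but you have not flagged this as the step doing the work.

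Once you notice that reconciliation, the entire Zorn's-Lemma machinery you rebuilt is unnecessary. The paper's own proof of this corollary is a one-line deduction from Theorem~2.5: for $(2)\Rightarrow(1)$, put $N^\p:=(0:_M\p)$; this is a $w$-submodule by Lemma~\ref{ann-w}, it satisfies $(M[\p])_w\subseteq N^\p\subseteq(0:_M\p)$ because $M[\p]\subseteq 1\cdot(0:_M\p)=(0:_M\p)$ and $(0:_M\p)$ is already a $w$-module, and $(M/N^\p)_w$ is $w$-cofinitely generated by hypothesis, so the hypotheses of Theorem~2.5$(2)$ hold and $M$ is $w$-Artinian. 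For $(1)\Rightarrow(2)$, $w$-Artinian modules are $w$-cofinitely generated, and $(M/(0:_M\p))_w$ being $w$-cofinitely generated is exactly \cite[Theorem~4.10]{zkh21}. So what you produced is essentially a blind re-derivation of Theorem~2.5, which is sound but should have been replaced by the two-sentence deduction above. The ``hard'' Cohen-style argument (maximal element of $\Gamma$, showing $\p=(0:_RN)$ is a prime $w$-ideal, showing $N\subseteq(M[\p])_w$) is the content of the theorem; the corollary is designed to be read off from it, not re-proven.
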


\begin{corollary}\cite[Theorem 4.10]{zkh21} Let $R$ be a ring. A $w$-module $M$ over $R$ is $w$-Artinian if and only if $M$ is $w$-cofinitely generated and  $(M/(0:_M\p))_w$ is $w$-cofinitely generated  for every prime $w$-ideal $\p$ of $R$.
\end{corollary}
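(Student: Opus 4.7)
The final statement strengthens the preceding corollary only in that the auxiliary hypothesis is required for \emph{every} prime $w$-ideal $\p$ rather than only those containing $(0:_RM)$. The plan is therefore to deduce it directly from the previous corollary, using Theorem \ref{main} (and its ingredients) only to handle the forward implication.

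For the forward direction, assume $M$ is $w$-Artinian. Then $M$ itself is $w$-cofinitely generated by \cite[Theorem 4.10]{zkh21} (or by observing that the descending chain condition on $w$-submodules forces every inverse system of $w$-submodules to be bounded below). To prove $(M/(0:_M\p))_w$ is $w$-cofinitely generated for any prime $w$-ideal $\p$, I would first note that $(0:_M\p)$ is a $w$-submodule of $M$ by Lemma \ref{ann-w}. Since $w$-Artinianness is inherited by factor $w$-modules, $(M/(0:_M\p))_w$ is $w$-Artinian (via \cite[Theorem 4.7]{zkh21} as used in the proof of Theorem \ref{main}), and hence $w$-cofinitely generated. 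This step requires no condition on $\p$ at all, so it automatically covers every prime $w$-ideal.

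For the backward direction, assume that $M$ is $w$-cofinitely generated and that $(M/(0:_M\p))_w$ is $w$-cofinitely generated for \emph{every} prime $w$-ideal $\p$. In particular, this condition holds for every prime $w$-ideal $\p$ with $(0:_RM)\subseteq \p$. Thus the hypothesis of the preceding corollary is satisfied, and $M$ is $w$-Artinian.

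There is really no serious obstacle here: the corollary is a weaker formulation of the one above it, so the nontrivial content is already supplied by Theorem \ref{main}. The only point worth being explicit about is the application of Lemma \ref{ann-w} to ensure $(0:_M\p)$ is a $w$-submodule, which is what licenses calling $(M/(0:_M\p))_w$ a factor by a $w$-submodule and inheriting $w$-cofinite generation from $w$-Artinianness in the forward direction.
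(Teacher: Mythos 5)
Your proposal is correct and matches the paper's (implicit) argument: the paper derives this corollary directly from Theorem \ref{main} by taking $N^\p=(0:_M\p)$, which is exactly what the preceding corollary you invoke does for the backward direction, while the forward direction follows from the fact that every quotient of a $w$-Artinian module by a $w$-submodule is $w$-cofinitely generated, with no condition on $\p$.
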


\begin{corollary} Let $R$ be a ring. A $w$-module $M$ over $R$ is $w$-Artinian if and only if $M$ is $w$-cofinitely generated and  $(M/(M[\p])_w)_w$ is $w$-cofinitely generated  for every prime $w$-ideal $\p$ of $R$ with $(0:_RM)\subseteq \p$, where $M[\p]=\bigcap\limits_{s\in R \setminus \p}s(0:_M\p)$.
\end{corollary}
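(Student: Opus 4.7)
The plan is to derive both implications directly from Theorem \ref{main} by taking the natural choice $N^{\p}:=(M[\p])_w$. The work reduces to verifying that this choice is legal in the sense required by clause (2) of the theorem, and then invoking the characterization of $w$-Artinian modules via quotients already appearing in the proof of Theorem \ref{main}.

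For the forward direction, I first note that if $M$ is $w$-Artinian then $M$ is trivially $w$-cofinitely generated. For the quotient condition, I observe that $M[\p]\subseteq M$ is $\GV$-torsion-free since $M$ is, so $(M[\p])_w$ makes sense inside $M$ and is a $w$-submodule of $M$. The proof of Theorem \ref{main} already cites \cite[Theorem 4.7]{zkh21}, which says $M$ is $w$-Artinian if and only if $(M/N)_w$ is $w$-cofinitely generated for every $w$-submodule $N$ of $M$. Applying this with $N=(M[\p])_w$ immediately yields that $(M/(M[\p])_w)_w$ is $w$-cofinitely generated.

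For the reverse direction, I apply Theorem \ref{main}(2) with the witness $N^{\p}:=(M[\p])_w$. The hypothesis supplies that $(M/N^{\p})_w$ is $w$-cofinitely generated. The lower bound $(M[\p])_w\subseteq N^{\p}$ is trivial by construction. For the upper bound $N^{\p}\subseteq (0:_M\p)$, I note that since $\p$ is a prime ideal we have $1\in R\setminus\p$, whence $M[\p]=\bigcap_{s\in R\setminus \p}s(0:_M\p)\subseteq (0:_M\p)$; but $(0:_M\p)$ is a $w$-submodule of $M$ by Lemma \ref{ann-w}, so taking $w$-envelopes preserves the inclusion and gives $(M[\p])_w\subseteq ((0:_M\p))_w=(0:_M\p)$. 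All conditions of Theorem \ref{main}(2) are therefore satisfied, and the theorem delivers $M$ is $w$-Artinian.

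There is essentially no obstacle beyond the bookkeeping outlined above; the only point that requires care is ensuring that $(M[\p])_w$ is genuinely a $w$-submodule of $M$ contained in $(0:_M\p)$, and this is handled by the $\GV$-torsion-freeness of $M$ together with Lemma \ref{ann-w}.
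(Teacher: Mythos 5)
Your proof is correct and is precisely the routine verification that the paper intends when it writes "Obviously, we can deduce the following corollaries by Theorem \ref{main}." The forward direction via \cite[Theorem 4.7]{zkh21} (that every $w$-quotient of a $w$-Artinian module has $w$-cofinitely generated $w$-envelope) and the reverse direction via the witness $N^{\p}:=(M[\p])_w$ — together with the observation that $M[\p]\subseteq (0:_M\p)$ because $1\in R\setminus\p$ and that $(0:_M\p)$ is a $w$-submodule by Lemma \ref{ann-w} — is exactly the bookkeeping needed, and you have supplied it.
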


\begin{corollary} Let $R$ be a ring. A $w$-module $M$ over $R$ is $w$-Artinian if and only if $M$ is $w$-cofinitely generated and  $(M/(M[\p])_w)_w$ is $w$-cofinitely generated  for every prime $w$-ideal $\p$ of $R$, where $M[\p]=\bigcap\limits_{s\in R \setminus \p}s(0:_M\p)$.
\end{corollary}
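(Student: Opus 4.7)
The plan is to deduce this corollary directly from the main result, Theorem \ref{main}, by taking the canonical choice $N^\p := (M[\p])_w$ for each relevant prime $w$-ideal $\p$.

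For the forward direction, suppose $M$ is $w$-Artinian. Then $M$ is automatically $w$-cofinitely generated. Let $\p$ be any prime $w$-ideal of $R$. Since $M[\p] \subseteq M$ and $M$ is itself a $w$-module, the $w$-envelope $(M[\p])_w$ embeds inside $M_w = M$ and is therefore a $w$-submodule of $M$. Appealing to \cite[Theorem 4.7]{zkh21}---the characterization of $w$-Artinianity invoked inside the proof of Theorem \ref{main}---the quotient $(M/N)_w$ is $w$-cofinitely generated for every $w$-submodule $N$ of $M$. Specializing to $N = (M[\p])_w$ yields the conclusion, with no restriction on $\p$.

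For the reverse direction, the plan is to apply Theorem \ref{main} with the choice $N^\p := (M[\p])_w$ for every prime $w$-ideal $\p$ containing $(0:_RM)$. Three items require checking: that $N^\p$ is a $w$-submodule of $M$ (immediate from the $w$-envelope construction inside $M$); that $(M/N^\p)_w$ is $w$-cofinitely generated (this is exactly the present hypothesis, which in fact is assumed for every prime $w$-ideal, and so certainly for those containing $(0:_RM)$); and the sandwich $(M[\p])_w \subseteq N^\p \subseteq (0:_M\p)$.

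The only step requiring an explicit argument---and what I regard as the main, albeit mild, technical point---is the right-hand inclusion $(M[\p])_w \subseteq (0:_M\p)$. This is handled by Lemma \ref{ann-w}: the submodule $(0:_M\p)$ is itself a $w$-submodule of $M$, it trivially contains $M[\p]=\bigcap_{s\in R\setminus \p}s(0:_M\p)$, and therefore it contains the $w$-envelope, giving $(M[\p])_w \subseteq ((0:_M\p))_w = (0:_M\p)$. With all three conditions verified, Theorem \ref{main} immediately implies that $M$ is $w$-Artinian, completing the argument.
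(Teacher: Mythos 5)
Your proof is correct and is exactly the deduction the paper intends when it writes ``Obviously, we can deduce the following corollaries by Theorem \ref{main}.'' The reverse direction instantiates Theorem \ref{main} with $N^\p := (M[\p])_w$, and the sandwich inclusion $(M[\p])_w \subseteq (0:_M\p)$ follows, as you note, because $(0:_M\p)$ is a $w$-submodule (Lemma \ref{ann-w}) containing $M[\p]$, hence contains its $w$-envelope. The forward direction, which must cover primes $\p$ not containing $(0:_R M)$, is handled uniformly by citing the characterization that $w$-Artinian implies $(M/N)_w$ is $w$-cofinitely generated for every $w$-submodule $N$; this is the same ingredient the paper uses inside the proof of Theorem \ref{main}. (An alternative for the extra primes: if $\p \not\supseteq (0:_R M)$, pick $r \in (0:_R M)\setminus\p$; then $r(0:_M\p) \subseteq rM = 0$ forces $M[\p]=0$, so $(M/(M[\p])_w)_w = M$, which is $w$-cofinitely generated by hypothesis. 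But your route via the characterization is cleaner.)
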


\appendix{ \textbf{Appendix:}\ The Magma calculation program for Example \ref{prin-not-w}.}
\begin{verbatim}
P<x1,x2,r,a,b,c,d>:=PolynomialRing(RationalField(),7);
I:=ideal<P|c*r-x1*a,x2*c-d-x1*b,d*r,x2*a-b*r,c*a-x1*r,r^2-a^2,x2*r-a*b >;
J:=ideal<P|x1,x2>+I;
K:=ideal<P|x1>+I;
L:=ideal<P|x2>+I;
T:=ideal<P|r>+I;
IdealQuotient(I,K);
A1:=x1*r - a*c in I;
A2:=x2*r - a*b in I;
A3:= r^2 - a^2 in I;
A4:=x1*a - r*c in I;
A5:= x2*a - r*b in I;
A6:= x1*b - x2*c + d in I;
A7:= r*d in I;
A8:= a*d in I;
A1 and A2 and A3 and A4 and A5 and A6 and A7 and A8;
//verify x1,x2 is an R-regular sequence.
IdealQuotient(K,L);
B1:=x2*r - a*b in K;
B2:= r^2 - a^2 in K;
B3:= x2*a - r*b in K;
B4:= x2*c - d in K;
B5:= r*c in K;
B6:= a*c in K;
B7:= r*d in K;
B8:= a*d in K;
B9:=x1 in K;
B1 and B2 and B3 and B4 and B5 and B6 and B7 and B8 and B9;
S1:=IdealQuotient(I,K) eq I;
S2:=IdealQuotient(K,J) eq K;
S3:=c notin IdealQuotient(I,T)+K; //verify  c not in Rx1+(0:r).
S4:=x2*c in IdealQuotient(I,T)+K; //verify  cx2 in Rx1+(0:r).
S1 and S2 and S3 and S4;
\end{verbatim}

\begin{acknowledgement}\quad\\
The author  was supported by the National Natural Science Foundation of China (No. 12061001).
\end{acknowledgement}


\begin{thebibliography}{99}

\bibitem{c50} I. S.  Cohen,  Commutative rings with restricted minimum condition,  Duke Math. J. 17 (1950), 27-42.
\bibitem{N15} J. Elliott,  Rings, Modules, and Closure operations, Springer Monographs in Mathematics. Springer, Cham, 2019.
\bibitem{G72} R. Gilmer, Multiplicative Ideal Theory, Pure and Applied Mathematics, No. 12, Marcel
Dekker, Inc., New York, 1972.
\bibitem{G86} J. S. Golan, Torsion Theories, Pitman Monographs and Surveys in Pure and Applied Mathematics 29, Longman Scientific and Technical, Horlow, 1986.
\bibitem{H82} A. Hiremath, Cofinitely generated and cofinitely related modules, Acta Math. Acad.
Sci. Hungarica 39 (1982) 1-9.
\bibitem{K08} H. Kim, Module-theoretic characterizations of $t$-linkative domains, Comm.
Algebra 36 (2008), 1649-1670.

\bibitem{K35} W. Krull, Idealtheorie. Springer, Berlin, 1935.

\bibitem{M74} E. Matlis, The Koszul complex and duality, Comm. Algebra, 1, (1974), 87-144.
\bibitem{N06} I. Nishitani, A Cohen-type theorem for Artinian modules,   Arch. Math. 87 (2006), 206-210.

\bibitem{pk21} A. Parkash and S. Kour, On Cohen's theorem for modules, Indian J. Pure Appl. Math.  52 (2021), 869-871.

\bibitem{sv72} D. W. Sharpe and P. Vamos,   Injective Modules, Cambridge Univ. Press, Cambridge, 1972.

\bibitem{s94}   P. F.  Smith,  Concerning a theorem of I. S. Cohen,  XIth National Conference of Algebra (Constanta, 1994), An. Stiint. Univ. Ovidius Constanta Ser. Mat. 2 (1994), 160-167.

\bibitem{fq15} F. G. Wang and L. Qiao, The $w$-weak global dimension of commutative rings, Bull. Korean
Math. Soc. 52 (2015), no. 4, 1327-1338.

\bibitem{fk16} F. G. Wang and H. Kim, Foundations of Commutative Rings and Their Modules, Springer, Singapore, 2016.

\bibitem{zkq23} X. L. Zhang, H. Kim, and W. Qi, On two versions of Cohen's theorem for modules, Kyungpook Math. J. 63 (2023), 29-36.

\bibitem{zkq23-1} X. L. Zhang, H. Kim, and W. Qi,  A note on a Cohen-type theorem for Artinian modules, Beitr\"{a}ge zur Algebra und Geometrie, to appear. https://doi.org/10.1007/s13366-022-00671-x

\bibitem{zkh21} D. C. Zhou, H. Kim, and K. Hu, A Cohen-type theorem for $w$-Artinian modules, J. Algebra Appl. 20 (2021), 2150106 (25 pages).

\bibitem{ZKWC}  D. C. Zhou, H. Kim, F. G. Wang, and D. Chen, A new semistar operation on acommutative ring and its applications. Comm. Algebra, 2020, 48 (9):3973-3988.

\bibitem{DF16}  D. C. Zhou and  F. G. Wang,    The direct and inverse limits of $w$-modules,  Comm. Algebra 44(6) (2016), 2495-2500.
\end{thebibliography}
\end{document}